  \definecolor{refkey}{rgb}{0.2,0,0}%
  \definecolor{labelkey}{rgb}{0.5,0.5,1}%
  \definecolor{light-red}{rgb}{0.85,0,0}%
\def\ss#1{\ifnum#1=1\let\next=\sone\else\let\next=\stwo\fi\next}
\def\tt#1{\ifnum#1=1\let\next=\tone\else\let\next=\ttwo\fi\next}
\def\ttm#1{\ifnum#1=2\let\next=\tone\else\let\next=\ttwo\fi\next}
\def\sone{{v+s}}
\def\stwo{{v-s}}
\def\tone{{u+t}}
\def\ttwo{{u-t}}
\newsavebox{\savepar}
\newtheorem{thm}{\global\def\endclaim{\end{thm}}Theorem}
   \newtheorem{lemma}[thm]{\global\def\endclaim{\end{lemma}}Lemma}
   \newtheorem{prop}[thm]{\global\def\endclaim{\end{prop}}Proposition}
   \newtheorem{rem}[thm]{\global\def\endclaim{\end{rem}}Remark}
\theoremstyle{definition}
\numberwithin{equation}{section}
\title[On multilinear polynomials in four variables evaluated on matrices]
    {On multilinear polynomials in four variables evaluated on matrices}
\author{David Buzinski}
\address{Case Western Reserve University}
\email{dab197@case.edu}
\author{Robin Winstanley}
\address{University of Washington}
\email{winstanr@uw.edu}
\thanks{$^*$Supported in part by National Science Foundation, grant \#1156798.}
 \keywords{multilinear polynomial, trace, matrix algebra\\
 {\sl 2010 MSC numbers:} 15A54, 16S50 }
\begin{document}

\begin{abstract}
Let $K$ be an algebraically closed field of characteristic $0$ and let $M_n(K)$, $n \ge 3$, be the matrix ring over $K$.
We will show that the image of any multilinear polynomial in four variables evaluated on $M_n(K)$ contains all matrices of trace $0$.
\end{abstract}

\maketitle

\section{Introduction}
In 1936 Shoda \cite{sh} proved that over a field of characteristic $0$, every matrix of trace $0$ can be represented as a commutator $AB-BA$ of two matrices. In 1957 Albert and Muckenhoupt \cite{am} extended the result to all fields. An old open question (Problem 1.98 in the Dniester Notebook, Fourth Edition, communicated by Lvov, also attributed to Kaplansky, see \cite{bmr1}) asks: "Let $f$ be a multilinear polynomial over a field $K$. Is the set of values of $f$ on the matrix algebra $M_n(K)$ a vector space?" There were few developments in this problem until recently when Kanel-Belov, Malev and Rowen solved the problem for $2\times2$ matrices \cite{bmr1} and made important progress towards the $3\times3$ case \cite{bmr2}.

Shoda's theorem can be reformulated in the form that the set of values of the polynomial $f(x,y)=xy-yx$ on the algebra of matrices contains all matrices of trace 0. In 2013 Shoda's result was generalized by Mesyan \cite{m} for multilinear polynomials of degree $3$ and by \v Spenko \cite{s} for Lie polynomials of degree $\le 4$.

Throughout the paper, we will use $[M_n(K),M_n(K)]$ to denote the $K$-subspace of $M_n(K)$ consisting of the matrices of trace zero and $f(M_n(K))$ to denote the set of values of a polynomial $f$ on $M_n(K)$. By a multilinear polynomial we will understand the polynomial in noncommutative variables and linear in each variable. Our main goal is to prove the following result:

\begin{thm}\label{T1}
Let $n \geq 3$ be an integer, $K$ an algebraically closed field of characteristic $0$, and $f \in K\langle x_1, x_2, x_3, x_4 \rangle$ any nonzero multilinear polynomial. Then $[M_{n}(K), M_{n}(K)]\subseteq f(M_{n}(K))$.
\end{thm}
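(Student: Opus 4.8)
The plan is to exploit two elementary invariances of the value set $I:=f(M_n(K))$ and then reduce to hitting a Zariski-dense family of targets. Substituting $x_i\mapsto Px_iP^{-1}$ shows that $I$ is closed under conjugation, while substituting $x_1\mapsto\lambda x_1$ and using multilinearity gives $\lambda I\subseteq I$ for every $\lambda\in K$, so $I$ is a conjugation-invariant cone. Since relabelling and rescaling the variables individually does not change $I$ (we range over all $4$-tuples), I would first normalize $f$ so that the coefficient of $x_1x_2x_3x_4$ is $1$, using $f\ne0$. Because $n\ge3$ we have $4<2n$, so $f$ is not a polynomial identity of $M_n(K)$, and $4<2n-1$ forces $f$ not to be central either; hence $I$ is neither $\{0\}$ nor contained in the scalars, and it remains to upgrade ``nonzero, noncentral'' to ``contains all of $[M_n(K),M_n(K)]$''.

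The engine I would use is a dichotomy based on the \emph{trace form} of $f$. For fixed $a,b,c$ the map $g_{a,b,c}(x):=f(a,b,c,x)$ is $K$-linear, and by cyclicity of the trace there is a matrix $M=M(a,b,c)$ with $\operatorname{tr}(g_{a,b,c}(x))=\operatorname{tr}(Mx)$ for all $x$; here $M$ vanishes identically in $a,b,c$ exactly when the coefficients of $f$ sum to zero over each cyclic (necklace) class of $S_4$. If this trace form is not identically zero, then $g_{a,b,c}$ is a sum of four sandwich operators $x\mapsto LxR$, and I would show that for a suitable explicit choice of $a,b,c$ (built, say, from a regular diagonal matrix and a cyclic permutation matrix) the operator $g_{a,b,c}$ is surjective onto $M_n(K)$ — reducing to a single nonvanishing-determinant check — whence $M_n(K)\subseteq I$ and in particular $[M_n(K),M_n(K)]\subseteq I$.

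The remaining, genuinely harder case is when the trace form vanishes identically, so that $I\subseteq[M_n(K),M_n(K)]$ and fixing three variables may be insufficient: for a left-normed commutator one gets $g_{a,b,c}(x)=[D,x]$, whose image omits the diagonal part of the trace-zero matrices. Here I would vary two variables and aim to realize every regular semisimple trace-zero matrix, i.e.\ every $D=\mathrm{diag}(\mu_1,\dots,\mu_n)$ with distinct $\mu_i$ and $\sum\mu_i=0$. Concretely I would substitute $x_i=D_i+N_i$ with $D_i$ diagonal and $N_i$ a controlled off-diagonal part, extract the diagonal component of $f$, and show that the induced map from the parameters to the trace-zero diagonal hyperplane is dominant by a Jacobian computation; the distinct-eigenvalue and algebraically-closed hypotheses then let me solve the system generically. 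Since such $D$ are Zariski dense in $[M_n(K),M_n(K)]$ and $I$ is constructible, this yields a dense open subset of the target, and I would close up by treating the degenerate conjugacy classes — in particular the nilpotent trace-zero matrices — by a separate direct substitution and the cone/conjugation invariance.

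I expect the main obstacle to be precisely this last case: the bookkeeping of the $S_4$-coefficient classes that governs when the one-variable reduction degenerates, the dominance/Jacobian analysis for the two-variable substitution, and the ad hoc handling of the non-semisimple targets are where the real work lies.
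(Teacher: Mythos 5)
Your proposed proof has two genuine gaps, one in each half of your dichotomy, and each is fatal as stated.

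First, the Case A engine cannot work. You claim that if the trace form is not identically zero, then some specialization $g_{a,b,c}(x)=f(a,b,c,x)$ (or with the free slot in another position) can be made a \emph{surjective} linear map $M_n(K)\to M_n(K)$, ``reducing to a single nonvanishing-determinant check.'' Take $f=[x_1,x_2][x_3,x_4]$. Its trace form is nonzero (e.g. $\mathrm{tr}\bigl([e_{1,2},e_{2,1}][e_{2,3},e_{3,2}]\bigr)=-1$), so this $f$ lands in your Case A; but $f$ vanishes identically whenever any single variable is set equal to $1$, so \emph{every} one-variable specialization satisfies $g_{a,b,c}(1)=0$. Hence $g_{a,b,c}$ always has nontrivial kernel and is never surjective, for any choice of $a,b,c$ and any position of the free slot; no determinant check can succeed. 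The same happens for $[x_1,x_2][x_3,x_4]+[x_3,x_4][x_1,x_2]$ and all products of two commutators --- which is to say, your ``easy'' case contains precisely the hardest polynomials. The dichotomy that actually works (the one the paper uses, following Mesyan) is transverse to yours: (i) if the full coefficient sum $\sum_\sigma a_\sigma$ is nonzero, Mesyan's result gives $f(M_n(K))=M_n(K)$ outright; (ii) if some partial derivative $f(\ldots,1,\ldots)$ is nonzero, one reduces to the three-variable theorem of Mesyan; (iii) if all partial derivatives vanish, Falk's theorem shows $f$ is a sum of products of (iterated) commutators, and Hall's basis for the free Lie algebra reduces the problem to finitely many explicit shapes, each handled by solving linear systems. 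Your proposal never invokes anything playing the role of Falk/Hall, and without such a structure theorem Case B below must absorb arbitrary polynomials with vanishing trace form (e.g. $[x_1x_2,x_3x_4]$), for which your ``extract the diagonal part'' plan is not specified.

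Second, in Case B, dominance is not enough. A Jacobian argument, even if carried out, shows the parameter-to-target map is dominant, so the image contains a dense (constructible, hence containing a dense open) subset of the trace-zero diagonal hyperplane; it does not produce \emph{every} regular semisimple trace-zero matrix. Conjugation invariance does not repair this: a conjugation-invariant dense open subset of $[M_n(K),M_n(K)]$ can still miss an entire orbit --- the orbit of a fixed regular semisimple $D$ is a closed, conjugation-invariant subvariety of positive codimension, so it can lie wholly inside the complement --- and images of polynomial maps need not be closed, so you cannot pass to closures. ``Solve the system generically'' therefore leaves uncovered not only the degenerate (non-semisimple) classes you flag, but also possibly infinitely many semisimple ones. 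This is exactly why the paper's argument works with \emph{exact} solutions of explicit linear systems (its Lemmas 1--4): for every Jordan canonical form of a trace-zero matrix it produces matrices realizing that form on the nose, and then conjugation invariance finishes. To salvage your outline you would need exact, not generic, solvability for every target, which is where the real content of the theorem lies.
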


The condition on $n \ge 3$ is necessary as, for instance, the theorem fails for $n=2$ and $f(x_1,x_2,x_3,x_4)=[x_1,x_2][x_3,x_4]+[x_3,x_4][x_1,x_2]$ which is a famous example of a central polynomial.

Our theorem confirms the following conjecture due to Mesyan \cite[Conjecture 11]{m} for the case $m=4$:

{\bf Conjecture.} Let $K$ be a field, $n \ge 2$ and $m \ge 1$ integers, and $f(x_1,\ldots,x_m)$ a nonzero multilinear polynomial in $K \langle x_1,\ldots,x_m \rangle$. If $n \ge m-1$, then $[M_{n}(K), M_{n}(K)]\subseteq f(M_{n}(K))$.

This problem remains open for $m>4$.
\section{The results}

We will start with several auxiliary results. By $e_{i,j}$ we denote a standard matrix unit, that is an $n \times n$ matrix with $1$ in the $i$'th row and $j$'th column and zeros elsewhere.

\begin{lemma}\label{L1}
Let $K$ be a field and let $a_{i,j} \in K$ be any elements with $\sum_{i=1}^{n}a_{i,i}=0$.
Let $A=\sum_{i=1}^{n-1}e_{i,i+1} \in M_n(K)$. Then there exists a $B \in M_n(K)$ such that
\begin{equation}\label{E1}
[A,B]= \sum_{i=1}^{n}a_{i,i}e_{i,i}+\sum_{i=1}^{n-1}a_{i,i+1}e_{i,i+1}.
\end{equation}
\end{lemma}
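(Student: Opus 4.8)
The plan is to look for $B$ among the lower bidiagonal matrices, i.e. those supported on the main diagonal and the first subdiagonal, and to show that the entries there can be chosen so that $[A,B]$ equals the right-hand side of \eqref{E1}. The motivation is a structural observation: if $B=\sum_{i=1}^n d_i e_{i,i}+\sum_{i=1}^{n-1}c_i e_{i+1,i}$, then multiplying by $A=\sum_{i=1}^{n-1}e_{i,i+1}$ shifts the rows of $B$ up by one in $AB$ and shifts its columns right by one in $BA$, so that both $AB$ and $BA$ are again supported on the main diagonal and first superdiagonal. Hence $[A,B]$ is automatically of the same shape as the target matrix in \eqref{E1}, and no off-diagonal cancellation has to be arranged by hand; this is what makes the bidiagonal ansatz the natural choice.

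Next I would carry out the short computation giving the entries of $[A,B]$ in terms of the $c_i$ and $d_i$. One finds that the superdiagonal entries are $[A,B]_{i,i+1}=d_{i+1}-d_i$ for $1\le i\le n-1$, which involve only the diagonal of $B$, while the diagonal entries are $[A,B]_{1,1}=c_1$, $[A,B]_{i,i}=c_i-c_{i-1}$ for $2\le i\le n-1$, and $[A,B]_{n,n}=-c_{n-1}$, which involve only the subdiagonal of $B$. Thus the two families of unknowns decouple completely, and matching against \eqref{E1} produces two independent triangular systems.

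The superdiagonal system $d_{i+1}-d_i=a_{i,i+1}$ is telescoping and always solvable: I would simply set $d_1=0$ and $d_{i+1}=\sum_{j=1}^i a_{j,j+1}$. The diagonal system reads $c_1=a_{1,1}$, $c_i-c_{i-1}=a_{i,i}$ for $2\le i\le n-1$, and $-c_{n-1}=a_{n,n}$; its first $n-1$ equations force $c_i=\sum_{j=1}^i a_{j,j}$, and the single remaining equation is precisely the consistency condition $\sum_{j=1}^n a_{j,j}=0$. This is \emph{exactly} the place where the hypothesis $\sum_{i=1}^n a_{i,i}=0$ enters, and it is the only place it is needed. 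With these choices of $c_i$ and $d_i$ the matrix $B$ is completely determined and, by the computation above, satisfies \eqref{E1}.

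Since the lemma is elementary, there is no serious obstacle: the real content is recognizing that the bidiagonal ansatz forces $[A,B]$ into the correct shape and decouples the unknowns. The only point requiring care is the bookkeeping at the boundary indices $i=1$ and $i=n$, where the missing terms $c_0$ and $c_n$ (equivalently, the vanishing of column $1$ of $BA$ and row $n$ of $AB$) must be treated separately; it is precisely this boundary behavior that turns the telescoping sum into the trace condition, and everything else is routine.
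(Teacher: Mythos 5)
Your proposal is correct and follows essentially the same route as the paper: the same lower bidiagonal ansatz for $B$, the same computation showing the diagonal and superdiagonal unknowns decouple into two triangular systems, and the same explicit telescoping solutions, with the trace hypothesis serving as the consistency condition for the diagonal system. The only difference is expository emphasis (you highlight why the ansatz forces the right support and where the trace condition enters), not mathematical substance.
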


\begin{proof}
We will be looking for a matrix $B$ in the form $\sum_{i=1}^{n}b_{i,i}e_{i,i}+\sum_{i=1}^{n-1}b_{i+1,i}e_{i+1,i}$.
Then
\begin{equation}\label{E2}
[A,B]=b_{2,1}e_{1,1}+\sum_{i=2}^{n-1}(b_{i+1,i}-b_{i,i-1})e_{i,i} -b_{n,n-1}e_{n,n}
+\sum_{i=1}^{n-1}(b_{i+1,i+1}-b_{i,i})e_{i,i+1}.
\end{equation}
Comparing (\ref{E1}) and (\ref{E2}), we arrive at two systems of linear equations in indeterminates $b_{i,j}$'s.
\begin{equation}\label{E3}\begin{array}{lcl} b_{2,1} &=& a_{1,1} \\ b_{3,2}-b_{2,1} &=& a_{2,2}\\ &\vdots& \\ b_{n,n-1}-b_{n-1,n-2}&=&a_{n-1,n-1}\\b_{n,n-1}&=&\sum_{i=1}^{n-1}a_{i,i},
\end{array}
\end{equation}
and
\begin{equation}\label{E4}\begin{array}{lcl} b_{2,2}-b_{1,1}&=&a_{1,2}\\b_{3,3}-b_{2,2}&=&a_{2,3}\\ &\vdots& \\ b_{n,n}-b_{n-1,n-1}&=&a_{n-1,n}.
\end{array}
\end{equation}
Clearly, both systems have solutions. For example, the system (\ref{E3}) has solutions:
$$\begin{array}{lcl}b_{2,1}&=&a_{1,1}\\b_{3,2}&=&a_{1,1}+a_{2,2}\\& \vdots &\\b_{n,n-1} &=& a_{1,1}+a_{2,2}+\ldots+a_{n-1,n-1},
\end{array}$$
and the system (\ref{E4}) has solutions:
$$\begin{array}{lcl}b_{1,1}&=&0\\b_{2,2}&=&a_{1,2}\\b_{3,3}&=&a_{1,2}+a_{2,3}\\b_{4,4}&=&a_{1,2}+a_{2,3}+a_{3,4}\\&\vdots&\\b_{n,n}&=&a_{1,2}+a_{2,3}+\ldots + a_{n-1,n}.
\end{array}$$
This completes the lemma.
\end{proof}

We will need the following well known facts about the images of multilinear polynomials that can be found, for example, in \cite[Lemma 6 and Corollary 8]{m}.
\begin{rem}\label{R1}
    Let $K$ be a field, $m$ a positive integer, and $$f(x_1,\ldots,x_m)=\sum_{\sigma \in S_m}a_{\sigma}x_{\sigma(1)}x_{\sigma(2)}\ldots x_{\sigma(m)} \in K\langle x_1,\ldots,x_m \rangle$$ a multilinear polynomial, where $S_m$ is the permutation group on $m$ elements.
    \begin{enumerate}
        \item[(i)] If  $\sum_{\sigma \in S_m}a_{\sigma} \neq 0$, then $f(M_n(K))=M_n(K)$.
        \item[(ii)] For any invertible matrix $A \in M_n(K)$ and any matrices $B_1,\ldots,B_m \in M_n(K)$ we have $$Af(B_1,\ldots,B_m)A^{-1}=f(AB_1A^{-1},\ldots,AB_mA^{-1}).$$
    \end{enumerate}

\end{rem}

We are ready to prove our next lemma.
\begin{lemma}\label{L2}
Let $K$ be an algebraically closed field of characteristic $0$. Let $\lambda$ be an element of $K$ and $\lambda\ne -1$. Then for every matrix $D \in M_n(K)$ of trace $0$ with $n \ge 3$, there exist $A,B,C \in M_n(K)$ such that $D=[A,B][A,C]+\lambda[A,C][A,B]$.
\end{lemma}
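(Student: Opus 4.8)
The plan is to use conjugation-invariance to replace $D$ by its Jordan canonical form and then to realize that normal form with the shift matrix together with Lemma~\ref{L1}. Since conjugation distributes over the word $[A,B][A,C]+\lambda[A,C][A,B]$ (an elementary fact, in the spirit of Remark~\ref{R1}(ii)) and $K$ is algebraically closed, it suffices to treat the case $D=J$, where $J$ is a Jordan matrix: it is upper bidiagonal, with eigenvalues $\mu_1,\dots,\mu_n$ (summing to $0$ since $\mathrm{tr}\,D=0$) on the diagonal and entries $J_{i,i+1}\in\{0,1\}$ on the first superdiagonal. I would then fix $A=\sum_{i=1}^{n-1}e_{i,i+1}$, so that Lemma~\ref{L1} lets me prescribe $[A,B]$ and $[A,C]$ to be any matrices supported on the diagonal and first superdiagonal whose diagonal entries sum to $0$.

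For the ansatz I would take $P:=[A,B]=\sum_i p_i e_{i,i}$ diagonal and $Q:=[A,C]=\sum_i q_i e_{i,i}+\sum_i t_i e_{i,i+1}$ upper bidiagonal, with $\sum_i p_i=\sum_i q_i=0$ (the only constraint Lemma~\ref{L1} imposes). A direct computation gives
\[
PQ+\lambda QP=\sum_{i=1}^n (1+\lambda)\,p_iq_i\,e_{i,i}+\sum_{i=1}^{n-1} t_i\,(p_i+\lambda p_{i+1})\,e_{i,i+1},
\]
so the dangerous second superdiagonal never appears. Matching this against $J$ reduces the whole lemma to the scalar system
\[
(1+\lambda)\,p_iq_i=\mu_i\quad(1\le i\le n),\qquad t_i\,(p_i+\lambda p_{i+1})=J_{i,i+1}\quad(1\le i\le n-1),
\]
together with $\sum_i p_i=\sum_i q_i=0$. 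Here $\lambda\neq -1$ is used to divide by $1+\lambda$; once scalars $p_i,q_i$ are found with $p_i+\lambda p_{i+1}\neq 0$ wherever $J_{i,i+1}=1$, each $t_i$ is determined (and may be taken to be $0$ when $J_{i,i+1}=0$), whence $A,B,C$ exist.

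Everything thus comes down to choosing scalars with $p_iq_i=\mu_i/(1+\lambda)$ and $\sum_i p_i=\sum_i q_i=0$, and this I expect to be the main point. When all $\mu_i\neq 0$ one picks $p_1,\dots,p_n$ with $\sum_i p_i=0$ and sets $q_i=\mu_i/\bigl((1+\lambda)p_i\bigr)$; the remaining requirement $\sum_i \mu_i/p_i=0$ is a single equation in the $p_i$, and for $n\ge 3$ at least two free parameters survive after imposing $\sum_i p_i=0$, so a generic choice (also avoiding the finitely many loci $p_i=0$ and $p_i+\lambda p_{i+1}=0$) solves it; the subcases where some $\mu_i$ vanish are handled by setting the corresponding $q_i=0$. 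The hypothesis $n\ge 3$ is exactly what makes this solvable: for $n=2$ the constraint $p_1=-p_2$ forces $\mu_1/p_1+\mu_2/p_2=(\mu_1-\mu_2)/p_1$, which cannot vanish unless $\mu_1=\mu_2=0$, mirroring the genuine failure of the statement in size $2$. Checking that the genericity argument really produces scalars meeting all the equalities and inequalities at once, uniformly over the possible Jordan types of $D$, is the step I would expect to require the most care.
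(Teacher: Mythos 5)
Your setup coincides with the paper's: reduce to a bidiagonal (Jordan) $D$ by conjugation invariance, take $A=\sum_{i=1}^{n-1}e_{i,i+1}$, and use Lemma~\ref{L1} to prescribe $[A,B]$ and $[A,C]$ as a trace-zero diagonal matrix $P$ and a trace-zero bidiagonal matrix $Q$; your formula for the diagonal and superdiagonal of $PQ+\lambda QP$ is correct. The divergence is at the matching step. The paper freezes $P=\mathrm{diag}(1,\dots,1,-(n-1))$ (and $\mathrm{diag}(1,\dots,1,2,-n)$ when $\lambda=1/(n-1)$), so that the equations for the entries of $Q$ become linear, whereas you let both factors vary, arriving at the bilinear system $(1+\lambda)p_iq_i=\mu_i$. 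Your formulation is not an over-complication; it is forced. With any fixed trace-zero diagonal $P$, matching the diagonals gives $q_i=\mu_i/((1+\lambda)p_i)$, and compatibility with $\mathrm{tr}\,Q=0$ (automatic, since $Q$ is a commutator) requires $\sum_i \mu_i/p_i=0$; no single $P$ can satisfy this for every trace-zero $D$, since that would force all $p_i$ to be equal, contradicting $\sum_i p_i=0$ and $p_i\ne 0$. In particular, for the paper's choice of $P$ the diagonal system is consistent only when $d_{n,n}=0$ (try $D=\mathrm{diag}(1,1,-2)$, $\lambda=0$, $n=3$: one needs $b_{i,i}=d_{i,i}$ for $i=1,2$ and $-2b_{3,3}=d_{3,3}$, incompatible with $b_{3,3}=-b_{1,1}-b_{2,2}$), so the parenthetical claim there that the last equation is redundant does not hold; the dependence of $P$ on $D$ that you build in is genuinely needed.

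That said, your proof of solvability of the bilinear system---which is now the entire content of the lemma---has a real gap. The assertion that ``at least two free parameters survive after imposing $\sum_i p_i=0$, so a generic choice (also avoiding the loci $p_i=0$ and $p_i+\lambda p_{i+1}=0$) solves it'' is not an argument: the condition $\sum_{i\in S}\mu_i/p_i=0$ (where $S$ is the support of $\mu$) is itself a nontrivial closed condition, so it cannot be met by a generic choice; what must be proved is that the variety $V=\{\,\sum_i p_i=0,\ \sum_{i\in S}\mu_i/p_i=0\,\}$ is not contained in the union of the forbidden hyperplanes. That containment can genuinely happen for irreducible components of $V$: take $n=4$, $\lambda=1$, $\mu=(1,1,-1,-1)$ with $J_{1,2}=1$ (legal, since $\mu_1=\mu_2$). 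Clearing denominators and using $p_3+p_4=-(p_1+p_2)$, the equation becomes $(p_1+p_2)(p_1p_2+p_3p_4)=0$, so $V=\{p_1+p_2=0\}\cup\{p_1p_2+p_3p_4=0\}$, and the first component lies entirely inside the forbidden hyperplane $p_1+\lambda p_2=0$. The lemma survives here because the quadric component contains admissible points (e.g.\ $p=(1,2,t_1,t_2)$ with $t_1+t_2=-3$, $t_1t_2=-2$), but showing that some component always contains an admissible point---uniformly in $n\ge 3$, in $\lambda\ne -1$, in the pattern of vanishing $\mu_i$, and in the positions of the $1$'s on the superdiagonal, where the Jordan constraint $\mu_i=\mu_{i+1}$ whenever $J_{i,i+1}=1$ (which you never invoke) is the structural fact available---is exactly the hard step, and it is missing. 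As it stands, the proposal is a correct reduction of Lemma~\ref{L2} to this existence statement, not a proof of it.
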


\begin{proof}
Any matrix with coefficients in an algebraically closed field is similar to its Jordan canonical form, so by Remark~\ref{R1} it is enough to show that every matrix $D$ of trace $0$ in the form $\sum_{i=1}^{n}d_{i,i}e_{i,i}+\sum_{i=1}^{n-1}d_{i,i+1}e_{i,i+1}$ can be represented as $[A,B][A,C]+\lambda[A,C][A,B]$. We will consider two cases.

Case 1:
Suppose $\lambda \neq \frac{1}{n-1}$. Then by Lemma~\ref{L1} if we let $A=\sum_{i=1}^{n-1}e_{i,i+1}$, then there exist $B,C \in M_n(K)$ such that $[A,B]=\sum_{i=1}^{n-1}e_{i,i}-(n-1)e_{n,n}$ and $[A,C]=\sum_{i=1}^{n}b_{i,i}e_{i,i}+\sum_{i=1}^{n-1}b_{i,i+1}e_{i,i+1}$ where $b_{n,n}=-\sum_{i=1}^{n-1}b_{i,i}$. Then we obtain
\begin{multline*}
[A,B][A,C]+\lambda[A,C][A,B]=\\(\lambda+1)\sum_{i=1}^{n-1}b_{i,i}e_{i,i}-(n-1)(\lambda+1)b_{n,n}e_{n,n}+(\lambda+1)\sum_{i=1}^{n-2}b_{i,i+1}e_{i,i+1}+(1-(n-1)\lambda)b_{n-1,n}e_{n-1,n}.
\end{multline*}
Like in Lemma~\ref{L1}, we arrive at two systems of linear equations in indeterminates $b_{i,j}$'s.
$$\begin{array}{lcl}(\lambda+1)b_{1,1}&=&d_{1,1}\\(\lambda+1)b_{2,2}&=&d_{2,2}\\&\vdots&\\(\lambda+1)b_{n-1,n-1}&=&d_{n-1,n-1}\\-(n-1)(\lambda+1)b_{n,n}&=&d_{n,n},
\end{array}$$
(we included the last equation just for consistency, it is actually the sum of all previous equations multiplied by $-1$),
and
$$\begin{array}{lcl}(\lambda+1)b_{1,2}&=&d_{1,2}\\(\lambda+1)b_{2,3}&=&d_{2,3}\\&\vdots&\\(\lambda+1)b_{n-2,n-1}&=&d_{n-2,n-1}\\(1-(n-1)\lambda)b_{n-1,n}&=&d_{n-1,n}.
\end{array}$$
Since $\lambda \neq -1$ and $\lambda \neq \frac{1}{n-1}$, obviously both systems have solutions.

Case 2:
Suppose $\lambda = \frac{1}{n-1}$. Then we take $[A,B]=\sum_{i=1}^{n-2}e_{i,i}+2e_{n-1,n-1}-ne_{n,n}$ and $[A,C]=\sum_{i=1}^{n}b_{i,i}e_{i,i}+\sum_{i=1}^{n-1}b_{i,i+1}e_{i,i+1}$ where $b_{n,n}=-\sum_{i=1}^{n-1}b_{i,i}$. Then we obtain
\begin{multline*}
[A,B][A,C]+\lambda[A,C][A,B]=(\lambda+1)\sum_{i=1}^{n-2}b_{i,i}e_{i,i}+2(\lambda+1)b_{n-1,n-1}e_{n-1,n-1}-n(\lambda+1)b_{n,n}e_{n,n}\\+(\lambda+1)\sum_{i=1}^{n-3}b_{i,i+1}e_{i,i+1}+(1+2\lambda)b_{n-2,n-1}e_{n-2,n-1}+(2-n\lambda)b_{n-1,n}e_{n-1,n}.
\end{multline*}
Again we arrive at two systems of linear equations
$$\begin{array}{lcl}(\lambda+1)b_{1,1}&=&d_{1,1}\\(\lambda+1)b_{2,2}&=&d_{2,2}\\&\vdots&\\(\lambda+1)b_{n-2,n-2}&=&d_{n-2,n-2}\\2(\lambda+1)b_{n-1,n-1}&=&d_{n-1,n-1}\\-n(\lambda+1)b_{n,n}&=&d_{n,n},
\end{array}$$
and
$$\begin{array}{lcl}(\lambda+1)b_{1,2}&=&d_{1,2}\\(\lambda+1)b_{2,3}&=&d_{2,3}\\&\vdots&\\(\lambda+1)b_{n-3,n-2}&=&d_{n-3,n-2}\\(2\lambda+1)b_{n-2,n-1}&=&d_{n-2,n-1}\\(2-n\lambda)b_{n-1,n}&=&d_{n-1,n}.
\end{array}$$
Since $\lambda =\frac{1}{n-1}$, both systems have solutions.

Therefore, we can get the desired representation and the proof is complete.
\end{proof}

The next result follows from the proof of \cite[Lemma 7.4]{s}:
\begin{lemma}\label{L2A}
Let $K$ be an algebraically closed field of characteristic $0$. Then for every matrix $D \in M_n(K)$ of trace $0$ with $n \ge 3$, there exist $A,B,C \in M_n(K)$ such that $D=[A,B][A,C]-[A,C][A,B]$.
\end{lemma}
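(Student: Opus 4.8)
The quantity to be produced is the double commutator $D=[A,B][A,C]-[A,C][A,B]=\big[[A,B],\,[A,C]\big]$, i.e.\ exactly the value $\lambda=-1$ that Lemma~\ref{L2} had to exclude, and the exclusion is forced rather than incidental. In the proof of Lemma~\ref{L2} the matrices $[A,B]$ and $[A,C]$ are of the ``diagonal $+$ superdiagonal'' type delivered by Lemma~\ref{L1}, hence upper triangular, and the commutator of two upper triangular matrices is strictly upper triangular. Thus that construction always yields a matrix with zero diagonal and can never account for the diagonal entries of $D$. Producing those diagonal entries is the entire content of the present lemma, and it is the reason a fresh argument is needed.

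Following Lemma~\ref{L2}, I would use Remark~\ref{R1}(ii) to replace $D$ by its Jordan form and so assume $D=\sum_{i=1}^{n}d_{i,i}e_{i,i}+\sum_{i=1}^{n-1}d_{i,i+1}e_{i,i+1}$ with $\sum_i d_{i,i}=0$, keeping $A=\sum_{i=1}^{n-1}e_{i,i+1}$. The new idea is to let $[A,B]$ and $[A,C]$ carry a \emph{subdiagonal} as well, i.e.\ to seek them as tridiagonal matrices $U=\sum p_i e_{i,i}+\sum q_i e_{i,i+1}+\sum u_i e_{i+1,i}$ and $V=\sum r_i e_{i,i}+\sum s_i e_{i,i+1}+\sum v_i e_{i+1,i}$. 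Extending the computation of Lemma~\ref{L1} (equivalently, using that $X\in\mathrm{Im}\,\mathrm{ad}_A$ iff $\mathrm{tr}(XA^k)=0$ for $0\le k\le n-1$, i.e.\ the sum along each subdiagonal of $X$ vanishes), such a tridiagonal $U$ lies in $\mathrm{Im}\,\mathrm{ad}_A$ precisely when $\sum_i p_i=0$ and $\sum_i u_i=0$, and likewise for $V$; so suitable $B,C$ will exist once admissible $U,V$ are found.

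Writing out $[U,V]=UV-VU$ (now pentadiagonal) and matching it band by band to $D$ produces the governing equations. Crucially the main diagonal of $[U,V]$ equals $\alpha_i-\alpha_{i-1}$ with $\alpha_i=q_i v_i-s_i u_i$ and $\alpha_0=\alpha_n=0$, so the previously unreachable diagonal of $D$ is now supplied by the new sub-times-super cross terms, the requirement being $\alpha_i=\sum_{l\le i}d_{l,l}$ (consistent at $i=n$ since $\mathrm{tr}\,D=0$); the superdiagonal gives $s_i(p_i-p_{i+1})+q_i(r_{i+1}-r_i)=d_{i,i+1}$, while the subdiagonal, the second superdiagonal and the second subdiagonal of $[U,V]$ must vanish. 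The main obstacle I anticipate is the solvability of this now \emph{bilinear} system uniformly in $n$ while respecting the constraints $\sum_i u_i=\sum_i v_i=0$; the delicate cases are the degenerate, nearly diagonal $D$, where one must spread the nonzero data over both the sub- and the superdiagonals of $U$ and $V$ rather than over-determine a single band. That this is already possible for $n=3$ is easy to check: $\mathrm{diag}(1,-1,0)=\big[e_{1,2}+e_{2,1}-e_{3,2},\,e_{2,1}-e_{3,2}\big]$, with both factors lying in $\mathrm{Im}\,\mathrm{ad}_A$.
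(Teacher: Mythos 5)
Your setup is sound as far as it goes: the criterion $X\in\mathrm{Im}\,\mathrm{ad}_A$ iff $\mathrm{tr}(XA^k)=0$ for $0\le k\le n-1$ is correct for $A=\sum_{i=1}^{n-1}e_{i,i+1}$, so a tridiagonal $U$ is of the form $[A,B]$ exactly when its diagonal and its subdiagonal each sum to zero, and your band-by-band equations for $[U,V]$ (diagonal $\alpha_i-\alpha_{i-1}$ with $\alpha_i=q_iv_i-s_iu_i$, the superdiagonal identity, and the vanishing conditions on the outer bands) are computed correctly. But the proof stops exactly where the lemma begins. You reduce the statement to the solvability of a bilinear system, then say its solvability is ``the main obstacle I anticipate,'' and verify a single matrix for $n=3$. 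Establishing that this system has a solution for \emph{every} trace-zero $D$ in Jordan form and every $n\ge 3$ \emph{is} the content of the lemma, and the difficulty is real: the vanishing of the second super- and subdiagonals forces, wherever the entries involved are nonzero, proportionalities $s_i=\mu q_i$ and $v_i=\nu u_i$, which then collide with the constraints $\sum_i u_i=\sum_i v_i=0$ when too few of the partial sums $\alpha_i=\sum_{l\le i}d_{l,l}$ are nonzero. Your own $n=3$ example escapes only because for $\mathrm{diag}(1,-1,0)$ the superdiagonal entries of $D$ vanish, which lets you take $q_2=0$; a uniform argument must handle, for instance, matrices where exactly one $\alpha_{i_0}$ is nonzero while other Jordan blocks have size $\ge 2$. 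A workable completion exists --- e.g.\ take $U$ upper bidiagonal and $V$ lower bidiagonal, so the outer bands vanish identically and the system linearizes to $q_iv_i=\alpha_i$, $q_i(r_{i+1}-r_i)=d_{i,i+1}$, $\sum_i v_i=0$, which is solvable unless exactly one $\alpha_i$ is nonzero, and then remove that exceptional configuration by permuting the Jordan blocks (possible for every non-nilpotent $D$ when $n\ge3$) --- but none of this case analysis is in your proposal. As it stands it is a plan, not a proof.

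For comparison, the paper does not prove this lemma by any such computation: it simply cites the proof of \cite[Lemma 7.4]{s}. That argument, echoed later in the paper's proof of Theorem~\ref{T1}, takes $A$ to be a diagonal matrix with distinct diagonal entries, uses \cite[Lemma 1.2]{ar} to identify $\mathrm{Im}\,\mathrm{ad}_A$ with the set of matrices having zero main diagonal, and invokes \cite[Proposition 1.8]{ar} (every trace-zero matrix is similar to one with zero diagonal), thereby avoiding your bilinear band system entirely.
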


We continue with
\begin{lemma} \label{L3}
    Let $n \ge 3$ be an integer. Let $K$ be a field of characteristic $0$ and let $a_{ij} \in K$ be any elements with $\sum_{i=1}^{n}a_{i,i}=0$. Then there exist $A,B,C \in M_n(K)$ such that $[A,[[A,B],[A,C]]]=\sum_{i=1}^{n}a_{i,i}e_{i,i}+\sum_{i=1}^{n-1}a_{i,i+1}e_{i,i+1}$.
\end{lemma}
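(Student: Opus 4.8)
The plan is to construct $A,B,C$ explicitly with $A=\sum_{i=1}^{n-1}e_{i,i+1}$, the same nilpotent matrix used in Lemma~\ref{L1}. The guiding principle is to grade $M_n(K)$ by the ``level'' $j-i$ of the matrix unit $e_{i,j}$. With respect to this grading the operator $\mathrm{ad}_A(X)=[A,X]$ is homogeneous of degree $+1$; concretely it carries the subdiagonal (level $-1$) onto the traceless diagonal and the diagonal (level $0$) onto the superdiagonal (level $+1$), and these two actions are exactly what systems (\ref{E3}) and (\ref{E4}) in Lemma~\ref{L1} solve. The target $T=\sum_{i}a_{i,i}e_{i,i}+\sum_i a_{i,i+1}e_{i,i+1}$ occupies levels $0$ and $1$, so since $\mathrm{ad}_A$ raises level by one I will force the inner matrix $M:=[[A,B],[A,C]]=[P,Q]$, where $P:=[A,B]$ and $Q:=[A,C]$, to occupy levels $-1$ and $0$, that is, to be subdiagonal-plus-diagonal.

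To do this I would take $P$ to be a pure subdiagonal matrix $P=\sum_{i=1}^{n-1}d_i e_{i+1,i}$ and $Q$ a diagonal-plus-superdiagonal matrix $Q=\sum_i t_i e_{i,i}+\sum_j s_j e_{j,j+1}$. A matrix $C$ with $[A,C]=Q$ exists by Lemma~\ref{L1} (only the successive differences $t_i-t_{i+1}$ and the $s_j$ will enter, so the diagonal of $Q$ may be taken traceless), and a matrix $B$ with $[A,B]=P$ exists as soon as $\sum_i d_i=0$, which I impose. A short computation then shows that $[P,Q]$ breaks into its two homogeneous parts: $[P,\sum_i t_i e_{i,i}]=\sum_i d_i(t_i-t_{i+1})e_{i+1,i}$ is subdiagonal, and $[P,\sum_j s_j e_{j,j+1}]=\sum_i d_i s_i(e_{i+1,i+1}-e_{i,i})$ is diagonal. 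Applying $\mathrm{ad}_A$ and reading off levels $0$ and $1$ separately, the equation $[A,M]=T$ decouples into one linear system for the $t_i$ (matched against the diagonal of $T$) and one for the $s_j$ (matched against the superdiagonal of $T$).

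These two systems are where the hypotheses are used. The diagonal system takes the telescoping form $d_k(t_k-t_{k+1})-d_{k-1}(t_{k-1}-t_k)=a_{k,k}$ and is consistent precisely because $\sum_i a_{i,i}=0$; one solves it for the quantities $d_k(t_k-t_{k+1})$. The superdiagonal system takes the tridiagonal form $2d_j s_j-d_{j-1}s_{j-1}-d_{j+1}s_{j+1}=a_{j,j+1}$, whose coefficient matrix (with $2$ on the diagonal and $-1$ on the sub- and superdiagonals) is invertible, so it determines the products $d_j s_j$. In both cases one then recovers $t_i$ and $s_j$ by dividing out the $d_i$, so the whole construction goes through provided every $d_i$ is nonzero while still $\sum_i d_i=0$. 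This is the main obstacle, and it is exactly solvable when $n-1\ge 2$, i.e. when $n\ge 3$ (for $n=2$ the lone coefficient $d_1$ would be forced to be $0$); here one may simply take $d=(1,\dots,1,-(n-2))$. With such a $P$ fixed and $t_i,s_j$ chosen as above, $[A,[[A,B],[A,C]]]=[A,[P,Q]]=T$, as required.
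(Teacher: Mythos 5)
Your proposal is correct and is essentially the paper's own construction: the same matrix $A=\sum_{i=1}^{n-1}e_{i,i+1}$, the same idea of forcing the inner commutator $[[A,B],[A,C]]$ onto the diagonal-plus-subdiagonal so that the outer $\mathrm{ad}_A$ lands on the target, and even the same subdiagonal vector $d=(1,\dots,1,-(n-2))$, since the paper's choice $C=\sum_{i=3}^{n}(i-2)e_{i,i-2}$ yields exactly $[A,C]=\sum_{i=1}^{n-2}e_{i+1,i}-(n-2)e_{n,n-1}$ (your $P$, with the roles of $B$ and $C$ swapped). The only difference is bookkeeping: the paper solves two nested telescoping (triangular) systems in sequence, whereas you compose the two steps and solve a single tridiagonal system whose coefficient matrix has determinant $n\neq 0$ in characteristic $0$ -- a true but unproved invertibility claim in your write-up that the paper's organization avoids needing.
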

\begin{proof}
Let $A \in M_{n}(K)$ be a matrix of the form $\sum_{i=1}^{n-1}e_{i,i+1}$. Then let $B,C \in M_{n}(K)$ such that
\begin{equation}\label{E5}
B=\sum_{i=1}^{n}b_{i,i}e_{i,i}+\sum_{i=1}^{n-1}b_{i+1,i}e_{i+1,i}
\end{equation}
 with $\sum_{i=1}^{n}b_{i,i}=0$, and $C=\sum_{i=3}^{n}(i-2)e_{i,i-2}$. Then $[A,C]=\sum_{i=1}^{n-2}e_{i+1,i}-(n-2)e_{n,n-1}$ and $[A,B]=\sum_{i=1}^{n-1}(b_{i+1,i+1}-b_{i,i})e_{i,i+1}+b_{2,1}e_{1,1}+\sum_{i=2}^{n-1}(b_{i+1,i}-b_{i,i-1})e_{i,i}-b_{n,n-1}e_{n,n}$. Note that for any $c_{i,j}$ with $\sum_{i=1}^{n}c_{i,i}=0$, we can get $[A,B]$ in the form $[A,B]=\sum_{i=1}^{n}c_{i,i}e_{i,i}+\sum_{i=1}^{n-1}c_{i,i+1}e_{i,i+1}$. Indeed, the two systems of linear equations in variables $b_{i,j}$
$$\begin{array}{lcl}b_{2,2}-b_{1,1}&=&c_{1,2}\\b_{3,3}-b_{2,2}&=&c_{2,3}\\&\vdots&\\b_{n,n}-b_{n-1,n-1}&=&c_{n-1,n},
\end{array}$$
and
$$\begin{array}{lcl}b_{2,1}&=&c_{1,1}\\b_{3,2}-b_{2,1}&=&c_{2,2}\\b_{4,3}-b_{3,2}&=&c_{3,3}\\&\vdots&\\b_{n,n-1}-b_{n-1,n-2}&=&c_{n-1,n-1}\\-b_{n,n-1}&=&c_{n,n}
\end{array}$$
have solutions satisfying $\sum_{i=1}^{n}b_{i,i}=0$. Then for $[A,B]=\sum_{i=1}^{n}c_{i,i}e_{i,i}+\sum_{i=1}^{n-1}c_{i,i+1}e_{i,i+1}$ and $[A,C]=\sum_{i=1}^{n-2}e_{i+1,i}-(n-2)e_{n,n-1}$ we get
\begin{equation*}
\begin{split}
[[A,B],[A,C]]=c_{1,2}e_{1,1}+\sum_{i=2}^{n-2}(c_{i,i+1}-c_{i-1,i})e_{i,i}+(-(n-2)c_{n-1,n}-c_{n-2,n-1})e_{n-1,n-1}\\+(n-2)c_{n-1,n}e_{n,n}+\sum_{i=1}^{n-2}(c_{i+1,i+1}-c_{i,i})e_{i+1,i}-(n-2)(c_{n,n}-c_{n-1,n-1})e_{n,n-1}.
\end{split}
\end{equation*}
Observe that for any $d_{i,j}$ with $\sum_{i=1}^{n}d_{i,i}=0$, the systems of linear equations in variables $c_{i,j}$
$$\begin{array}{lcl}c_{2,2}-c_{1,1}&=&d_{2,1}\\c_{3,3}-c_{2,2}&=&d_{3,2}\\&\vdots&\\c_{n-1,n-1}-c_{n-2,n-2}&=&d_{n-1,n-2}\\-(n-2)(c_{n,n}-c_{n-1,n-1})&=&d_{n,n-1},
\end{array}$$
and
$$\begin{array}{lcl}c_{1,2}&=&d_{1,1}\\c_{2,3}-c_{1,2}&=&d_{2,2}\\c_{3,4}-c_{2,3}&=&d_{3,3}\\&\vdots&\\c_{n-2,n-1}-c_{n-3,n-2}&=&d_{n-2,n-2}\\-(n-2)c_{n-1,n}-c_{n-2,n-1}&=&d_{n-1,n-1}\\(n-2)c_{n-1,n}&=&d_{n,n}
\end{array}$$
have solutions satisfying $\sum_{i=1}^{n}c_{i,i}=0$. Now we have $D=[[A,B],[A,C]]=\sum_{i=1}^{n}d_{i,i}e_{i,i}+\sum_{i=1}^{n-1}d_{i+1,i}e_{i+1,i}$ with $\sum_{i=1}^{n}d_{i,i}=0$. Observe that the matrix $D$ is of the same form as matrix $B$ in equation (\ref{E5}) so $[A,D]$ can be represented in the form $\sum_{i=1}^{n}a_{i,i}e_{i,i}+\sum_{i=1}^{n-1}a_{i,i+1}e_{i,i+1}$.
\end{proof}

Before we start the proof of our main result, we need to handle one special case.

\begin{prop} \label{P1}
    Let $n\ge 3$ be an integer and let $K$ be an algebraically closed field of characteristic $0$. If
    \begin{multline*}
    f(x_1,x_2,x_3,x_4)=\\ [x_1,x_2][x_3,x_4]+[x_3,x_4][x_1,x_2]+
    [x_2,x_3][x_1,x_4]+[x_1,x_4][x_2,x_3]-[x_1,x_3][x_2,x_4]-[x_2,x_4][x_1,x_3],
    \end{multline*}
     then $[M_{n}(K), M_{n}(K)]\subseteq f(M_{n}(K))$.
\end{prop}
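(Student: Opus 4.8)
The plan is to combine the conjugation-invariance of $f(M_n(K))$ coming from Remark~\ref{R1}(ii) with a single substitution that folds $f$ into the expression already handled by Lemma~\ref{L3}. Note first that Remark~\ref{R1}(i) is useless here: every commutator has coefficient sum zero, so $\sum_{\sigma}a_\sigma=0$ and $f$ behaves like a central-type polynomial, which is exactly why this case must be treated separately. Since $K$ is algebraically closed, every trace-zero matrix is similar to its Jordan canonical form, which has the shape $\sum_{i=1}^n d_{i,i}e_{i,i}+\sum_{i=1}^{n-1}d_{i,i+1}e_{i,i+1}$ with $\sum_i d_{i,i}=0$; by Remark~\ref{R1}(ii) the set $f(M_n(K))$ is closed under conjugation, so it suffices to realize every matrix $D$ of this form as a value of $f$.

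The heart of the argument is the following substitution. Let $A=\sum_{i=1}^{n-1}e_{i,i+1}$ be the shift matrix of Lemma~\ref{L3}, and evaluate $f$ at $x_1=A$, $x_2=B$, $x_3=A^2$, $x_4=C$ for matrices $B,C$ to be chosen. Since $A$ commutes with $A^2$ we have $[x_1,x_3]=[A,A^2]=0$, which annihilates the last two terms of $f$. Writing $U=[A,B]$, $V=[A,C]$ and using $[A^2,X]=A[A,X]+[A,X]A$, the remaining four terms become $\{U,\,AV+VA\}-\{AU+UA,\,V\}$, where $\{P,Q\}=PQ+QP$. Expanding, the summands $UAV$ and $VAU$ cancel and one is left with
\[
f(A,B,A^2,C)=UVA+AVU-AUV-VUA=[U,V]A-A[U,V]=-[A,[[A,B],[A,C]]].
\]

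Thus the value of $f$ along this substitution is precisely the negative of the expression studied in Lemma~\ref{L3}. Given a target $D=\sum_{i=1}^n d_{i,i}e_{i,i}+\sum_{i=1}^{n-1}d_{i,i+1}e_{i,i+1}$ with $\sum_i d_{i,i}=0$, apply Lemma~\ref{L3} to the matrix $-D$ (which has the same shape and trace zero) to obtain $B,C\in M_n(K)$ with $[A,[[A,B],[A,C]]]=-D$; then $f(A,B,A^2,C)=D$. Together with the reduction of the first paragraph this yields $[M_n(K),M_n(K)]\subseteq f(M_n(K))$.

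I expect the one delicate point to be the second paragraph: recognizing that one should set $x_3=x_1^2$ rather than attempt a coincidence such as $x_3=x_1$, which would kill $f$ entirely (one checks directly that $f$ vanishes whenever two of its arguments coincide, i.e. $f$ is alternating). It is exactly the choice $x_3=A^2$ that collapses the six-term anticommutator expression into the single nested commutator $[A,[[A,B],[A,C]]]$, after which Lemma~\ref{L3} does all the remaining work.
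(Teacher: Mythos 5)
Your proof is correct and takes essentially the same approach as the paper: the paper substitutes $f(A,A^2,B,C)$ (setting $x_2=A^2$) and collapses the six terms to $+[A,[[A,B],[A,C]]]$, while you set $x_3=A^2$ and obtain the negative, a sign you correctly absorb by applying Lemma~\ref{L3} to $-D$. Both arguments then finish identically via Lemma~\ref{L3}, Jordan canonical forms, and the conjugation invariance of Remark~\ref{R1}.
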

\begin{proof}
Let $A,B,C\in M_{n}(K)$. Then
    \begin{equation*}
        \begin{split}
            f(A,A^2,B,C)&=[A,A^2][B,C]+[B,C][A,A^2]+[A^2,B][A,C]
            \\&+[A,C][A^2,B]-[A,B][A^2,C]-[A^2,C][A,B]\\
            &=[A^2,B][A,C]+[A,C][A^2,B]-[A,B][A^2,C]\\
            &-[A^2,C][A,B]\\
            &=[A,[[A,B],[A,C]]].
        \end{split}
    \end{equation*}
By Lemma~\ref{L3}, there exist $A,B,C\in M_{n}(K)$ such that $f(A,A^2,B,C)$ can be represented as $\sum_{i=1}^{n}a_{i,i}e_{i,i}+\sum_{i=1}^{n-1}a_{i,i+1}e_{i,i+1}$ with $\sum_{i=1}^{n}a_{i,i}=0$. This includes the Jordan canonical forms of all matrices with trace zero. By Remark~\ref{R1}, we obtain $[M_{n}(K), M_{n}(K)]\subseteq f(M_{n}(K))$.
\end{proof}

We are ready to prove our main result.

{\bf Proof of Theorem~\ref{T1}.}
We can write any multilinear polynomial $f \in K\langle x_1, x_2, x_3, x_4 \rangle$ in the form
    \begin{equation*}
        f(x_1,x_2,x_3,x_4) = \sum\limits_{\sigma\in S_4}a_{\sigma}x_{\sigma(1)}x_{\sigma(2)}x_{\sigma(3)}x_{\sigma(4)},
    \end{equation*}
    where $S_4$ is the permutation group on four elements. If $\sum\limits_{\sigma\in S_4}a_{\sigma}\neq0$, then by Remark~\ref{R1}, $f(M_{n}(K))$ contains all matrices, so in particular $[M_{n}(K), M_{n}(K)]\subseteq f(M_{n}(K))$.

     Therefore, we can assume $\sum\limits_{\sigma\in S_4}a_{\sigma}=0$.

    If the partial derivative of $f$ with respect to some $x_i$ is nonzero, then we can set $x_i$ equal to the identity matrix $1$ and the situation reduces to the three variable case covered in \cite[Theorem 13]{m}.

    Therefore we can assume all partial derivatives of $f$ are zero, that is
    \begin{equation}
        f(1,x_2,x_3,x_4)=f(x_1,1,x_3,x_4)=f(x_1,x_2,1,x_4)=f(x_1,x_2,x_3,1)=0.
        \nonumber
    \end{equation}
    By Falk's Theorem \cite{f} (see also \cite[Theorem 1.1]{j} for the English version) any such polynomial is a product of (iterated) commutators. It means that our polynomial can be written in the form
    \begin{equation*}
        \begin{split}
        f(x_1,x_2,x_3,x_4)=L(x_1,x_2,x_3,x_4)+c_{1234}[x_1,x_2][x_3,x_4]+c_{1324}[x_1,x_3][x_2,x_4]+
        c_{1423}[x_1,x_4][x_2,x_3]\\
        +c_{2314}[x_2,x_3][x_1,x_4]+c_{2413}[x_2,x_4][x_1,x_3]
        +c_{3412}[x_3,x_4][x_1,x_2],
        \end{split}
    \end{equation*}
    where $L(x_1,x_2,x_3,x_4)$ is a Lie polynomial in four variables. According to \cite[Theorem 3.1]{h}, every such polynomial can be written as follows:
    \begin{multline*}
    L(x_1,x_2,x_3,x_4)=z_1[[[x_2,x_1],x_3],x_4]+z_2[[[x_3,x_1],x_2],x_4]+z_3[[[x_4,x_1],x_2],x_3]\\+z_4[[x_4,x_1],[x_3,x_2]]+z_5[[x_4,x_2],[x_3,x_1]]+z_6[[x_4,x_3],[x_2,x_1]].
    \end{multline*}
    Observe that the last three terms can be written as a linear combination of products of commutators. Thus without loss of generality we may assume that $f(x_1,x_2,x_3,x_4)$ is of the form
    \begin{multline*}
    f(x_1,x_2,x_3,x_4)=z_1[[[x_2,x_1],x_3],x_4]+z_2[[[x_3,x_1],x_2],x_4]+z_3[[[x_4,x_1],x_2],x_3]\\+c_{1234}[x_1,x_2][x_3,x_4]+c_{1324}[x_1,x_3][x_2,x_4]+
        c_{1423}[x_1,x_4][x_2,x_3]\\+
        c_{2314}[x_2,x_3][x_1,x_4]+c_{2413}[x_2,x_4][x_1,x_3]
        +c_{3412}[x_3,x_4][x_1,x_2].
    \end{multline*}
    Suppose that for some $i=1,2,3$, $z_i \neq 0$. Say, let $z_1 \neq 0$. Arguing as in the proof of \cite[Lemma 7.4]{s}, we take $x_1=x_3=x_4=S$, where $S$ is a diagonal matrix with distinct diagonal entries. By \cite[Lemma 1.2]{ar}, $f(S,x_2,S,S)$ consists of all matrices with only zeros on the main diagonal. By \cite[Proposition 1.8]{ar}, every matrix of trace zeros is similar to a matrix with only zero on the main diagonal, so by Remark~\ref{R1}, we obtain $[M_{n}(K), M_{n}(K)]\subseteq f(M_{n}(K))$. The cases when $z_2 \neq 0$ or $z_3 \neq 0$ can be treated similarly. Now we can assume that $z_1=z_2=z_3=0$ and $f$ is of the form
    \begin{multline*}
    f(x_1,x_2,x_3,x_4)=c_{1234}[x_1,x_2][x_3,x_4]+c_{1324}[x_1,x_3][x_2,x_4]+
        c_{1423}[x_1,x_4][x_2,x_3]\\+
        c_{2314}[x_2,x_3][x_1,x_4]+c_{2413}[x_2,x_4][x_1,x_3]
        +c_{3412}[x_3,x_4][x_1,x_2].
    \end{multline*}

    Consider two cases.
    Case 1: Assume $c_{1234}=c_{2314}=c_{3412}=c_{1423}=-c_{1324}=-c_{2413}$.
    Then
    \begin{multline*}
    f(x_1,x_2,x_3,x_4)=c_{1234}([x_1,x_2][x_3,x_4]+[x_3,x_4][x_1,x_2]+
    [x_2,x_3][x_1,x_4]+[x_1,x_4][x_2,x_3]\\-[x_1,x_3][x_2,x_4]-[x_2,x_4][x_1,x_3]).
    \end{multline*}
     Applying Proposition~\ref{P1}, we can conclude $[M_{n}(K), M_{n}(K)]\subseteq f(M_{n}(K))$.

    Case 2: Assume at least one of the following $c_{1234}=c_{2314}=c_{3412}=c_{1423}=-c_{1324}=-c_{2413}$ does not hold. For any $A,B,C \in M_{n}(K)$,
    at least one of the following expressions is not zero:
    $$f(A,A,B,C)=(c_{1324}+c_{2314})[A,B][A,C]+(c_{1423}+c_{2413})[A,C][A,B],$$
    $$f(A,B,A,C)=(c_{1234}-c_{2314})[A,B][A,C]+(c_{3412}-c_{1423})[A,C][A,B],$$
    $$f(A,B,C,A)=(-c_{1234}-c_{2413})[A,B][A,C]+(-c_{1324}-c_{3412})[A,C][A,B],$$
    $$f(B,A,A,C)=(-c_{1234}-c_{1324})[A,B][A,C]+(-c_{2413}-c_{3412})[A,C][A,B],$$
    $$f(B,A,C,A)=(-c_{1423}+c_{1234})[A,B][A,C]+(c_{3412}-c_{2314})[A,C][A,B],$$
    $$f(B,C,A,A)=(c_{1324}+c_{1423})[A,B][A,C]+(c_{2314}+c_{2413})[A,C][A,B].$$

    The problem is now reduced to the polynomial $[A,B][A,C]+\lambda[A,C][A,B]$. If $\lambda \neq -1$, then by Lemma~\ref{L2}, the image of $f$ contains all matrices of trace zero and our theorem is complete. If $\lambda = -1$, then the desired result follows from Lemma~\ref{L2A}. The proof is complete.

{\bf Acknowledgement.} We would like to thank our advisor Dr. Mikhail Chebotar for his help and encouragement. We would like to thank the Department of Mathematical Sciences at Kent State University for its hospitality. We are grateful to the referee for her/his useful suggestions.

\end{document}